\newtheorem{theorem}{Theorem}[section]
\newtheorem{lemma}[theorem]{Lemma}
\newtheorem{proposition}[theorem]{Proposition}
\newtheorem{definition}[theorem]{Definition}
\newtheorem{remark}[theorem]{Remark}
\newtheorem{example}[theorem]{Example}
\DeclareMathAlphabet{\mathpzc}{OT1}{pzc}{m}{it}
\def\to{\rightarrow}
\def\f{\mathfrak}
\def\c{\mathcal}
\def\r{\mathrm}
\def\ov{\overline}
\date{}
\begin{document}
\title{Dynamical entropy of probability measures on\\ infinite product spaces}
\author{Maysam Maysami Sadr\thanks{sadr@iasbs.ac.ir, corresponding author, orcid.org/0000-0003-0747-4180},
Mina Shahrestani\\\vspace{2mm}\&\\ Danial Bouzarjomehri Amnieh}
\affil{Department of Mathematics\\ Institute for Advanced Studies in Basic Sciences (IASBS)\\ Zanjan, Iran}
\maketitle
\begin{abstract}
The aim of this note is to introduce a notion of dynamical entropy, which we call infinite-product entropy, for probability
measures on (countable) infinite cartesian product of any measurable space with itself. The idea behind the definition
is that any infinite product space may be considered as a type of dynamical object. We have considered in a previous note a similar
idea in topological dynamics to define a notion of dynamical entropy for arbitrary subsets of infinite products of compact topological spaces.
We consider some basic properties of infinite-product entropy, e.g. shift invariance, convexity, subadditivity with respect to product of
probability measures, the behavior with respect to dilation and restriction. We show that for a translation invariant probability measure
the infinite-product entropy coincides with the usual entropy of a shift transformation. We consider some basic examples and computations.
We also consider a variational inequality related to infinite-product entropy and topological entropy of subsets of infinite product spaces.\\
\textbf{MSC 2020:} 28D20, 37A50, 37B40.\\
\textbf{Keywords:} dynamical entropy, probability measure, infinite product space, topological entropy, variational principle.
\end{abstract}
\section{The main concept}\label{2110020632}
In this short note we introduce a notion of dynamical entropy $H^{ip}(\mu)$, which we call infinite-product entropy, for probability
measures $\mu$ on (countable) infinite cartesian product $(X^\infty,\Sigma^\infty)$ of an arbitrary measurable space $(X,\Sigma)$.
The idea behind the definition of $H^{ip}$ is that any infinite product space may be considered as a type of dynamical object.
Recently, we have considered in our previous note \cite{Sadr1} a similar idea in topological dynamics to define a notion of dynamical
entropy for arbitrary subsets of $X^\infty$ where $X$ is any compact topological space.

In this section we define $H^{ip}$ and prove that the ordinary measure-theoretic entropy of any measure-preserving transformation
on a probability space may be considered as a special case of our notion of infinite-product entropy.
In $\S$\ref{2110020633}, we consider some basic properties of infinite-product entropy, e.g. shift invariance, convexity,
subadditivity with respect to product of probability measures, the behavior with respect to dilation and restriction. We show that for a
translation invariant probability measure the infinite-product entropy coincides with the usual entropy of a shift transformation. We also
consider some basic examples and computations. In $\S$\ref{2110020634}, we prove a variational inequality related to infinite-product entropy
$H^{ip}(\mu)$ of a Borel probability measure $\mu$ and topological entropy of any subset $S\subseteq X^\infty$ with $\r{Support}(\mu)\subset S$
where $X$ is a compact space.

Let $(X,\Sigma)$ be a measurable space i.e. $X$ is a set and $\Sigma$ is a $\sigma$-algebra of subsets of $X$.
By a \emph{partition} $\c{P}$ for $X$ we mean a finite collection $\c{P}=\{A_i\}_{i=1}^k$ of members of $\Sigma$ such that
$\cup_{i=1}^kA_i=X$ and $A_i\cap A_j=\emptyset$ for $i\neq j$. For partitions $\c{P}$ and $\c{Q}=\{B_j\}_j$ we let $\c{P}\vee\c{Q}$
denote the partition $\{A_i\cap B_j\}_{i,j}$. We write $\c{P}\preceq\c{Q}$ when every $A_i$ is a union of $B_j$s. Recall that (\cite{Walters1})
for any probability measure $\nu$ on $X$ and any partition $\c{P}$ of $X$ as above the Shannon entropy associated to the pair $(\nu,\c{P})$ is
defined by $${H^{sh}}(\nu,\c{P}):=-\sum_{i=1}^k(\nu A_i)\log(\nu A_i).$$ (By the convention $0\log 0$ is defined to be $0$.) For any measurable
transformation $T:X\to X$ preserving $\nu$ (i.e. $\nu(T^{-1}A)=\nu(A)$) the entropy of the triple $(T,\nu,\c{P})$ is given by
\begin{equation}\label{2109250537}
{H^{sh}}(T,\nu,\c{P}):=\lim_{n\to\infty}\frac{1}{n}{H^{sh}}(\nu,\vee_{i=0}^{n-1}T^{-i}\c{P}),\end{equation}
and the entropy of $(T,\nu)$ is defined to be the value $${H^{sh}}(T,\nu):=\sup_{\c{P}}{H^{sh}}(T,\nu,\c{P}),$$ where the supremum is taken over
all partitions $\c{P}$ of $X$. Note that the sequence in (\ref{2109250537}) decreases to ${H^{sh}}(T,\nu,\c{P})$ \cite[Theorem 4.10]{Walters1}.

We denote by $X^\infty$ the infinite cartesian product $\prod_{n=0}^\infty X$ and by $\Sigma^\infty$ the product $\sigma$-algebra on $X^\infty$.
For $A_0,\ldots,A_{n-1}$ in $\Sigma$ we let $$\ov{A_0\cdots A_{n-1}}:=A_0\times\cdots\times A_{n-1}\times X\times\cdots\subseteq X^\infty.$$
For any family $\c{A}=\{A_i\}_{i\in I}$ of members of $\Sigma$ and any $n\geq1$ we let
$$\ov{\c{A}}^n:=\Big\{\ov{A_{i_0}\cdots A_{i_{n-1}}}:i_0,\ldots,i_{n-1}\in I\Big\}.$$
Thus $\Sigma^\infty$ is the smallest $\sigma$-algebra on $X^\infty$ containing $\ov{\Sigma}^n$ for every $n\geq1$.
If $\c{P}$ is a partition for $X$ then $\ov{\c{P}}^n$ is a partition for $X^\infty$. Our main definition is as follows:
\begin{definition}\label{2110020750}
Let $\mu$ be a probability measure on the measurable space $(X^\infty,\Sigma^\infty)$. For any partition $\c{P}$ of $X$ the infinite-product entropy
of the pair $(\mu,\c{P})$ is defined by $${H}^{ip}(\mu,\c{P}):=\limsup_{n\to\infty}\frac{1}{n}{H^{sh}}(\mu,\ov{\c{P}}^n).$$
The infinite-product entropy of $\mu$ is defined to be the value $${H}^{ip}(\mu):=\sup_{\c{P}}{H}^{ip}(\mu,\c{P}),$$
where the supremum is taken over all partitions $\c{P}$ of $X$.\end{definition}
In the following it is shown that the entropy of measurable transformations may be considered as the infinite-product entropy of some special class
of probability measures. Let $T:X\to X$ be a measurable transformation and $\nu$ be a probability measure on $X$. Consider the measurable mapping
$\hat{T}:X\to X^\infty$ defined by $\hat{T}(x)=(x,T(x),T^2(x),\ldots)$ and let $\mu_{T,\nu}$ denote the push forward of $\nu$ under $\hat{T}$.
Thus $\mu_{T,\nu}$ is a probability measure on $X^\infty$ given by
\begin{equation}\label{2109250625}
\mu_{T,\nu}\Big(\ov{A_0\cdots A_{n-1}}\Big)=\nu\Big(\cap_{i=0}^{n-1}T^{-i}A_i\Big).\end{equation}
\begin{proposition}
If $T$ is a measure-preserving transformation on $(X,\Sigma,\nu)$ then $${H}^{ip}(\mu_{T,\nu})={H^{sh}}(T,\nu).$$\end{proposition}
\begin{proof}
For any partition $\c{P}$ of $X$ it follows from (\ref{2109250625}) that
$${H^{sh}}(\mu_{T,\nu},\ov{\c{P}}^n)={H^{sh}}(\nu,\vee_{i=0}^{n-1}T^{-i}\c{P}).$$ Thus ${H}^{ip}(\mu_{T,\nu},\c{P})={H^{sh}}(T,\nu,\c{P})$.\end{proof}
\begin{remark}
\emph{As it can be seen above, to define ${H}^{ip}(\mu)$ we only use the finite dimensional marginal distributions of $\mu$.
Thus if for every $n\geq0$, $\mu_{n]}$ is a probability measure on $X^n$ and if the sequence $(\mu_{n]})_n$ is \emph{consistence},
then one can define the entropy value $${H}^{ip}\big((\mu_{n]})_{n\geq0}\big).$$ All of the results given in this note are valid for such
sequences of measures. Of course if $(X,\Sigma)$ is a nice measurable space (e.g. $X$ is a Polish space with its Borel $\sigma$-algebra)
then from Kolmogorov's Extension Theorem we know that $\mu_{n]}$s are marginal distributions of a unique probability
measure $\mu$ on $X^\infty$ and then ${H}^{ip}\big((\mu_{n]})_{n\geq0}\big)=H^{ip}(\mu)$.}\end{remark}
\section{Some basic properties of ${H}^{ip}$}\label{2110020633}
\begin{proposition}\label{2109280630}
Let $\mu$ be a probability measure on $X^\infty$ and let $\c{P},\c{Q}$ be partitions of $X$.
\begin{enumerate}\item[(i)] ${H}^{ip}(\mu,\c{P})\leq\log|\c{P}|$.
\item[(ii)] If $\c{P}\preceq\c{Q}$ then ${H}^{ip}(\mu,\c{P})\leq{H}^{ip}(\mu,\c{Q})$.
\item[(iii)] ${H}^{ip}(\mu,\c{P}\vee\c{Q})\leq{H}^{ip}(\mu,\c{P})+{H}^{ip}(\mu,\c{Q})$.\end{enumerate}\end{proposition}
\begin{proof}By \cite[Corollary 4.2.1]{Walters1} we have ${H^{sh}}(\mu,\ov{\c{P}}^n)\leq n|\c{P}|$. Thus (i) is satisfied.
If $\c{P}\preceq\c{Q}$ then by \cite[Theorem 4.3(iv)]{Walters1} we have ${H^{sh}}(\mu,\ov{\c{P}}^n)\leq{H^{sh}}(\mu,\ov{\c{Q}}^n)$ for every
$n\geq1$ and hence the inequality in (ii) is satisfied. (iii) is proved similarly by \cite[Theorem 4.3(viii)]{Walters1}.\end{proof}
We say that a probability measure $\mu$ on $X^\infty$ is \emph{stationary} if $\mu$ is preserved by the shift mapping
$\f{S}:X^\infty\to X^\infty$ defined by $(x_0,x_1,x_2,\ldots)\mapsto(x_1,x_2,\ldots)$.
\begin{proposition}\label{2109280631}
Let $\mu$ on $X^\infty$ be stationary. Then for any partition $\c{P}$ of $X$,
$${H}^{ip}(\mu,\c{P})={H^{sh}}(\f{S},\mu,\ov{\c{P}}^m)\hspace{10mm}(m\geq1).$$\end{proposition}
\begin{proof} It is easily checked that $\ov{\c{P}}^{m+n-1}=\vee_{i=0}^{n-1}\f{S}^{-i}\ov{\c{P}}^m$. Thus
\begin{equation*}\begin{split}
{H^{sh}}(\f{S},\mu,\ov{\c{P}}^m)&=\lim_{n\to\infty}\frac{1}{n}{H^{sh}}(\mu,\vee_{i=0}^{n-1}\f{S}^{-i}\ov{\c{P}}^m)\\
&=\lim_{n\to\infty}\frac{1}{n}{H^{sh}}(\mu,\ov{\c{P}}^{m+n-1})\\
&=\Big(\lim_{n\to\infty}\frac{m+n-1}{n}\Big)\Big(\limsup_{n\to\infty}\frac{1}{m+n-1}{H^{sh}}(\mu,\ov{\c{P}}^{m+n-1})\Big)\\
&={H}^{ip}(\mu,\c{P}).\end{split}\end{equation*}\end{proof}
\begin{proposition}\label{2109290634}
Let $\mu$ be a stationary probability measure on $X^\infty$. Then $${H}^{ip}(\mu)={H^{sh}}(\f{S},\mu).$$\end{proposition}
\begin{proof} It follows immediately from Proposition \ref{2109280631} that ${H}^{ip}(\mu)\leq{H^{sh}}(\f{S},\mu)$.
Let $\Gamma$ denote the smallest subalgebra of $\Sigma^\infty$ containing $\ov{\Sigma}^n$ for every $n\geq1$. Then every member of
$\Gamma$ is a finite union of sets of the form $\ov{A_0\cdots A_{n-1}}$ ($n\geq1, A_i\in\Sigma$). Since $\Sigma^\infty$
is the smallest $\sigma$-algebra on $X^\infty$ containing $\Gamma$ it follows from \cite[Theorem 4.21]{Walters1} that
$${H^{sh}}(\f{S},\mu)=\sup_{\c{Q}}{H^{sh}}(\f{S},\mu,\c{Q}),$$
where the supremum is taken over all partitions $\c{Q}$ of $X^\infty$ such that $\c{Q}\subset\Gamma$. For every such a partition
$\c{Q}$ it follows from the above observations that there exists a partition $\c{P}$ of $X$ such that $\c{Q}\preceq\ov{\c{P}}^m$ for some
$m\geq1$. By \cite[Theorem 4.12(iii)]{Walters1} we have $${H^{sh}}(\f{S},\mu,\c{Q})\leq{H^{sh}}(\f{S},\mu,\ov{\c{P}}^m).$$
Thus by Proposition \ref{2109280631} we have $${H^{sh}}(\f{S},\mu,\c{Q})\leq{H}^{ip}(\mu,\c{P})\leq{H}^{ip}(\mu).$$
The proof is complete.\end{proof}
Infinite-product entropy is shift invariant:
\begin{proposition}
For probability measure $\mu$ on $X^\infty$ and partition $\c{P}$ of $X$ we have
\begin{equation*}{H}^{ip}(\f{S}_*\mu,\c{P})={H}^{ip}(\mu,\c{P}).\end{equation*}
Thus ${H}^{ip}(\f{S}_*\mu)={H}^{ip}(\mu)$. (Here $\f{S}_*\mu$ denotes the push-forward of $\mu$ under $\f{S}$.)\end{proposition}
\begin{proof} Let $\phi$ denotes the real-valued function $x\mapsto x\log x$ on $[0,\infty)$.
For any finite sequence $r_1,\ldots,r_k$ of nonnegative numbers with $\sum_{i=1}^kr_i=r$ the convexity
of $\phi$ implies that $-\sum_{i=1}^k\phi(r_i)\leq-\phi(r)+r\log k$. Let $\c{P}=\{A_i\}_{i=1}^k$ be a partition for $X$. We have
\begin{equation*}\begin{split}
{H^{sh}}(\mu,\ov{\c{P}}^n)&=\sum_{i_1,\ldots,i_{n-1}=1}^k\Big(-\sum_{i_0=1}^k\phi(\mu(\ov{A_{i_0}A_{i_1}\cdots A_{i_{n-1}}}))\Big)\\
&\leq\sum_{i_1,\ldots,i_{n-1}=1}^k\Big(-\phi(\mu(\ov{XA_{i_1}\cdots A_{i_{n-1}}}))+\mu(\ov{XA_{i_1}\cdots A_{i_{n-1}}})\log k\Big)\\
&={H^{sh}}(\f{S}_*\mu,\ov{\c{P}}^{n-1})+\log k,\end{split}\end{equation*}
and hence ${H}^{ip}(\mu,\c{P})\leq{H}^{ip}(\f{S}_*\mu,\c{P})$. Let $\c{Q}$ be the partition of $X^\infty$ given by
$$\c{Q}:=\Big\{\ov{XA_{i_1}\cdots A_{i_{n-1}}}:1\leq i_1,\ldots,i_{n-1}\leq k\Big\}.$$
We have ${H^{sh}}(\f{S}_*\mu,\ov{\c{P}}^{n-1})={H^{sh}}(\mu,\c{Q})$. Since $\c{Q}\preceq\ov{\c{P}}^{n}$, by
\cite[Theorem 4.3(iv)]{Walters1} we have ${H^{sh}}(\mu,\c{Q})\leq{H^{sh}}(\mu,\ov{\c{P}}^n)$. Thus
${H^{sh}}(\f{S}_*\mu,\ov{\c{P}}^{n-1})\leq{H^{sh}}(\mu,\ov{\c{P}}^n)$, and hence ${H}^{ip}(\f{S}_*\mu,\c{P})\leq{H}^{ip}(\mu,\c{P})$.
The proof is complete.\end{proof}
Infinite-product entropy is a convex function:
\begin{proposition}\label{2109290635}
Let $\mu,\rho$ be probability measures on $X^\infty$. Then
$${H}^{ip}(t\mu+(1-t)\rho)\leq t{H}^{ip}(\mu)+(1-t){H}^{ip}(\rho)\quad\quad(t\in[0,1]).$$\end{proposition}
\begin{proof}It is shown in \cite[proof of Theorem 8.1]{Walters1} that for any partition $\c{P}$ of $X$,
$${H^{sh}}(t\mu+(1-t)\rho,\ov{\c{P}}^n)\leq t{H^{sh}}(\mu,\ov{\c{P}}^n)+(1-t){H^{sh}}(\rho,\ov{\c{P}}^n)+\log2.$$
Thus we have $${H}^{ip}(t\mu+(1-t)\rho,\c{P})\leq t{H}^{ip}(\mu,\c{P})+(1-t){H}^{ip}(\rho,\c{P}).$$
The desired inequality is concluded from the above inequality.\end{proof}
It follows from \cite[Theorem 8.1]{Walters1} and Proposition \ref{2109290634} that the inequality in Proposition \ref{2109290635} is a equality
if $\mu,\rho$ are stationary. Thus infinite-product entropy is an affine function on the convex set of
stationary probability measures on $X^\infty$.

Let $(r_n)_{n\geq0}$ be a strictly increasing sequence of nonnegative integers. The associated \emph{restriction} $\f{R}$
on $X^\infty$ is the mapping defined by $$\f{R}:X^\infty\to X^\infty\hspace{10mm}(x_0,x_1,\ldots)\mapsto(x_{r_0},x_{r_1},\ldots).$$
\begin{proposition}\label{2109300610}
With the above notations suppose that there exists $k\geq1$ such that $r_n\leq(n+1)k-1$ for every
$n\geq0$. Then for any probability measure $\mu$ on $X^\infty$ we have $${H}^{ip}(\f{R}_*\mu)\leq k{H}^{ip}(\mu).$$\end{proposition}
\begin{proof}For any partition $\c{P}$ of $X$ since $\f{R}^{-1}(\ov{\c{P}}^n)\preceq\ov{\c{P}}^{kn}$ ($n\geq1$) we have
$${H^{sh}}(\f{R}_*\mu,\ov{\c{P}}^n)={H^{sh}}(\mu,\f{R}^{-1}(\ov{\c{P}}^n))\leq{H^{sh}}(\mu,\ov{\c{P}}^{kn}),$$
and hence ${H}^{ip}(\f{R}_*\mu,\c{P})\leq k{H}^{ip}(\mu,\c{P})$. The proof is complete.\end{proof}
For $k\geq1$ the \emph{$k$-dilation} on $X^\infty$ is the mapping $$\f{D}^{(k)}:X^\infty\to X^\infty\hspace{10mm}
(x_0,x_1,\ldots)\mapsto(\overbrace{x_0,\ldots,x_0}^k,\overbrace{x_1,\ldots,x_1}^k,\ldots).$$
\begin{proposition}
For any probability measure $\mu$ on $X^\infty$ we have
$${H}^{ip}(\f{D}^{(k)}_*\mu)=\frac{1}{k}{H}^{ip}(\mu).$$\end{proposition}
\begin{proof} Let $\c{P}=\{A_i\}$ be a partition for $X$. Let $n\geq1$ and suppose that $[\frac{n}{k}]$ denotes the smallest integer greater than or
equal to $\frac{n}{k}$. For any member $B=\ov{A_{i_0}\cdots A_{i_{n-1}}}$ of $\c{P}^n$ if
\begin{equation}\label{2109300611}\big(A_{i_0}=\cdots=A_{i_{k-1}}\big)\hspace{3mm}\big(A_{i_k}=\cdots=A_{i_{2k-1}}\big)\hspace{3mm}
\cdots\hspace{3mm}\big(A_{i_{([\frac{n}{k}]-1)k}}=\cdots=A_{i_{n-1}}\big)\end{equation}
then $(\f{D}^{(k)}_*\mu)(B)=\mu(\ov{B_{0}\cdots B_{[\frac{n}{k}]-1}})$ where $B_{\ell}=A_{i_{\ell k}}$; and if (\ref{2109300611}) does not hold
then $(\f{D}^{(k)}_*\mu)(B)=0$. Thus we have ${H^{sh}}(\f{D}^{(k)}_*\mu,\ov{\c{P}}^n)={H^{sh}}(\mu,\ov{\c{P}}^{[\frac{n}{k}]})$. This implies that
$${H}^{ip}(\f{D}^{(k)}_*\mu,\c{P})=\frac{1}{k}{H}^{ip}(\mu,\c{P}).$$ The proof is complete.\end{proof}
Let $f:X\to Y$ be a measurable map between measurable spaces. We denote by $f^\infty$ the mapping $X^\infty\to Y^\infty$ defined by
$(x_1,x_2,\ldots)\mapsto(fx_1,fx_2,\ldots)$.
\begin{proposition}\label{2109300640}
For every probability measure $\mu$ on $X^\infty$ we have
$${H}^{ip}(f^\infty_*\mu)\leq{H}^{ip}(\mu).$$\end{proposition}
\begin{proof}It follows from the easily verified equality ${H}^{ip}(f^\infty_*\mu,\c{Q})={H}^{ip}(\mu,f^{-1}\c{Q})$ for every partition
$\c{Q}$ of $Y$. (Indeed we have ${H^{sh}}(f^\infty_*\mu,\ov{\c{Q}}^n)={H^{sh}}(\mu,\ov{f^{-1}\c{Q}}^n)$ for every $n\geq1$.)\end{proof}
\begin{proposition}\label{2109300612}
Let $\pi$ be a probability measure on $(X\times Y)^\infty$. Let $\mu,\rho$ denote the marginal
measures of $\pi$ respectively on $X^\infty$ and $Y^\infty$: $$(\r{proj}_{X^\infty})_*\pi=\mu\quad\text{and}\quad(\r{proj}_{Y^\infty})_*\pi=\rho.$$
The following inequality holds: $$\r{max}\{{H}^{ip}(\mu),{H}^{ip}(\rho)\}\leq{H}^{ip}(\pi)\leq{H}^{ip}(\mu)+{H}^{ip}(\rho).$$\end{proposition}
\begin{proof}Similar to the proof of Proposition \ref{2109290634} it follows from \cite[Theorem 4.21]{Walters1} that
$${H}^{ip}(\pi)=\sup{H}^{ip}(\pi,\c{P}\odot\c{Q}),$$ where the supremum is taken over all partitions $\c{P}\odot\c{Q}:=\{A_i\times B_j\}_{i,j}$
such that $\c{P}=\{A_i\}_i$ and $\c{Q}=\{B_j\}_j$ are respectively partitions of $X$ and $Y$. We have
$${H^{sh}}(\mu,\ov{\c{P}}^n)={H^{sh}}(\pi,\r{proj}_{X^\infty}^{-1}\ov{\c{P}}^n)
\hspace{10mm}{H^{sh}}(\rho,\ov{\c{Q}}^n)={H^{sh}}(\pi,\r{proj}_{Y^\infty}^{-1}\ov{\c{Q}}^n)$$
$$\ov{\c{P}\odot\c{Q}}^n=(\r{proj}_{X^\infty}^{-1}\ov{\c{P}}^n)\vee(\r{proj}_{X^\infty}^{-1}\ov{\c{P}}^n)$$
Thus ${H^{sh}}(\pi,\ov{\c{P}\odot\c{Q}}^n)\leq{H^{sh}}(\mu,\ov{\c{P}}^n)+{H^{sh}}(\rho,\ov{\c{Q}}^n)$. This implies that
$${H}^{ip}(\pi,\c{P}\odot\c{Q})\leq{H}^{ip}(\mu,\c{P})+{H}^{ip}(\rho,\c{Q}).$$ Hence ${H}^{ip}(\pi)\leq{H}^{ip}(\mu)+{H}^{ip}(\rho)$.
The other inequality follows from Proposition \ref{2109300640}.\end{proof}
\begin{proposition}
Let $\mu,\rho$ be probability measures respectively on $X^\infty,Y^\infty$. Suppose that at least one of these measures is stationary. Then
$${H}^{ip}(\mu\times\rho)={H}^{ip}(\mu)+{H}^{ip}(\rho),$$ where $\mu\times\rho$ is considered as a measure on
$(X\times Y)^\infty\cong(X^\infty\times Y^\infty)$.\end{proposition}
\begin{proof} First of all note that if $\sum_ir_i=1$ and $\sum_js_j=1$ where $r_i,s_j\geq0$ then
\begin{equation}\label{2110010700}\sum_{i,j}\phi(r_is_j)=\sum_i\phi(r_i)+\sum_j\phi(s_j).\end{equation}
We suppose that $\rho$ is stationary and ${H}^{ip}(\mu),
{H}^{ip}(\rho)<\infty$. Let $\epsilon>0$ be arbitrary and fixed. There exist a partition $\c{P}$ of $X$ and a subsequence $(n_k)_k$
of natural numbers such that the following inequality holds.
$${H}^{ip}(\mu)-\frac{\epsilon}{2}\leq\lim_{k\to\infty}\frac{1}{n_k}{H^{sh}}(\mu,\ov{\c{P}}^{n_k})
\leq\limsup_{n\to\infty}\frac{1}{n}{H^{sh}}(\mu,\ov{\c{P}}^{n}).$$ Also there exist a partition $\c{Q}$ of $Y$ such that
$${H}^{ip}(\rho)-\frac{\epsilon}{2}\leq\lim_{n\to\infty}\frac{1}{n}{H^{sh}}(\rho,\ov{\c{Q}}^{n}).$$ We have
\begin{equation*}\begin{split}{H}^{ip}(\mu)+{H}^{ip}(\rho)-\epsilon&\leq\lim_{k\to\infty}\frac{1}{n_k}
\big[{H^{sh}}(\mu,\ov{\c{P}}^{n_k})+{H^{sh}}(\rho,\ov{\c{Q}}^{n_k})\big]\\
&=\lim_{k\to\infty}\frac{1}{n_k}{H^{sh}}(\mu\times\rho,\ov{\c{P}\odot\c{Q}}^{n_k})\\
&\leq\limsup_{n\to\infty}\frac{1}{n}{H^{sh}}(\mu\times\rho,\ov{\c{P}\odot\c{Q}}^{n})\\
&={H}^{ip}(\mu\times\rho,\c{P}\odot\c{Q})\\
&\leq{H}^{ip}(\mu\times\rho).\end{split}\end{equation*}Thus ${H}^{ip}(\mu\times\rho)\geq{H}^{ip}(\mu)+{H}^{ip}(\rho)$.
The reverse inequality follows from Proposition \ref{2109300612}.\end{proof}
\begin{proposition}\label{2110040655}
Let $\mu$ be a probability measure on $X^\infty$ and $k\geq1$. Let ${H}^{ip}_{X^k}(\mu)$ denote the entropy of
$\mu$ as a measure on the space $(X^k)^\infty$ identified with $X^\infty$ under the mapping $$(X^k)^\infty\to X^\infty\quad
\big((x_0^1,\ldots,x^k_0),(x_1^1,\ldots,x_1^k),\ldots\big)\mapsto\big(x_0^1,\ldots,x^k_0,x_1^1,\ldots,x_1^k,\ldots\big).$$
Then we have $${H}^{ip}_{X^k}(\mu)=k{H}^{ip}_{X}(\mu).$$\end{proposition}
\begin{proof} First of all note that $${H}^{ip}_{X^k}(\mu)=\sup{H}^{ip}_{X^k}(\mu,\c{P}_1\odot\cdots\odot\c{P}_k)$$
where the supremum is taken over all $k$-tuples $(\c{P}_1,\ldots,\c{P}_k)$ of partitions of $X$. We have
\begin{equation*}\begin{split}{H}^{ip}_{X^k}(\mu,\odot_{i=1}^k\c{P}_i)&=\limsup_{n\to\infty}\frac{1}{n}
{H^{sh}}(\mu,\ov{\odot_{i=1}^k\c{P}_i}^{n})\\
&\leq\limsup_{n\to\infty}\frac{1}{n}{H^{sh}}(\mu,\ov{\vee_{i=1}^k\c{P}_i}^{nk})\\
&\leq k{H}^{ip}_{X}(\mu,\vee_{i=1}^k\c{P}_i).\end{split}\end{equation*}
This shows that ${H}^{ip}_{X^k}(\mu)\leq k{H}^{ip}_{X}(\mu)$. For any partition $\c{P}$ of $X$ we have
\begin{equation*}\begin{split}{H}^{ip}_{X}(\mu,\c{P})&=\limsup_{n\to\infty}\frac{1}{n}{H^{sh}}(\mu,\ov{\c{P}}^n)\\
&\leq\limsup_{n\to\infty}\frac{1}{n}{H^{sh}}(\mu,\ov{\odot_{i=1}^k\c{P}}^{[\frac{n}{k}]})\\
&\leq \frac{1}{k}{H}^{ip}_{X^k}(\mu,\odot_{i=1}^k\c{P}).\end{split}\end{equation*}
Thus ${H}^{ip}_{X^k}(\mu)\geq k{H}^{ip}_{X}(\mu)$. The proof is complete.\end{proof}
We end this section by some examples and simple computations:
\begin{example}
\emph{Let $(\nu_n)_{n\geq0}$ be a sequence of probability measures on a measurable space $X$. It is well-known that the product probability measure
$\prod_{n=0}^\infty\nu_n$ exists on $X^\infty$ \cite[$\S$38]{Halmos1}. For any measurable partition $\c{P}$ of $X$ by (\ref{2110010700}) we have
$${H}^{ip}(\prod_{n=0}^\infty\nu_n,\c{P})=\limsup_{n\to\infty}\frac{1}{n}\sum_{i=0}^{n-1}{H^{sh}}(\nu_i,\c{P}).$$  In case that
$\c{P}=\{A_1,\ldots,A_k\}$ and there is a measurable mapping $T:X\to X$ and a probability measure $\mu$ on $X$ such that $\nu_n:=(T^n)_*\nu$, we have
$${H}^{ip}(\prod_{n=0}^\infty\nu_n,\c{P})=-\liminf_{n\to\infty}\frac{1}{n}\sum_{i=0,\ldots,n-1}^{j=1,\ldots,k}\nu(T^{-i}A_k)\log\nu(T^{-i}A_k).$$
In case that $\nu_n=\nu$ for every $n$, we have $${H}^{ip}(\prod_{n=0}^\infty\nu,\c{P})={H^{sh}}(\nu,\c{P}).$$
In particular, if $X:=\{1,\ldots,k\}$ then we have $${H}^{ip}(\prod_{n=0}^\infty\nu)=-\sum_{i=1}^k\nu(i)\log\nu(i).$$}\end{example}
\begin{example}
\emph{Let $X=\{1,\ldots,k\}$. If $\c{P}$ denotes the partition $\{\{1\},\ldots,\{k\}\}$ then for any probability measure $\mu$ on $X^\infty$
by Proposition \ref{2109280630} we have $H^{ip}(\mu)=H^{ip}(\mu,\c{P})\leq\log k$. Let $\mu$ be the distribution of a homogenous $X$-valued
Markov chain with stochastic matrix $(p_{ij})_{i,j=1}^k$ and invariant initial probability vector $(p_1,\ldots,p_k)$. By
\cite[Theorem 4.27]{Walters1} and Proposition \ref{2109290634} we have $${H}^{ip}(\mu)=-\sum_{i,j=1}^kp_ip_{ij}\log p_{ij}.$$}
\end{example}
\begin{example}
\emph{Let $k\ge2$ be a natural number. For any $r\in[0,\infty]$ let $\f{B}^{(k)}(r)$ denote the representation of $r$ in base $k$.
For those rational numbers $r$ that have two $k$-representations we choose the lower representation. Then $\f{B}^{(k)}:[0,1]\to X^\infty$ is a
well-defined injective measurable mapping where $X:=\{0,\ldots,k-1\}$. For any probability Borel measure $\nu$ on $[0,1]$ it is natural to
consider the value $H^{ip}(\f{B}^{(k)}_*\nu)$ as the \emph{$k$-base entropy number} of $\nu$. It is easily seen that for the Lebesgue
measure this number is equal to the maximum value $\log k$. For any point-mass measure this number is $0$.}
\end{example}
\section{A Variational Inequality}\label{2110020634}
Let $X$ be a compact topological space. In \cite{Sadr1} we have defined an entropy number for any arbitrary subset $S$ of $X^\infty$.
In this note we call that number \emph{topological infinite-product entropy} of $S$ and denote it by $H^{tip}(S)$.
Let us quickly review the definition: For any open cover $\c{U}$ of $X$ the family $\ov{\c{U}}^n$ ($n\geq1$) is an open cover for $X^\infty$.
Let $$N_S(\ov{\c{U}}^n):=\min|\c{A}|\hspace{5mm}\text{ant}\hspace{5mm}
H^{tip}(S,\c{U}):=\limsup_{n\to\infty}\frac{1}{n}N_S(\ov{\c{U}}^n)$$
where the minimum is taken over all subfamilies $\c{A}\subseteq\ov{\c{U}}^n$ with $S\subset\cup\c{A}$.
Then $H^{tip}(S)$ is defined to be the value $\sup H^{tip}(S,\c{U})$ where the supremum is taken over all open covers $\c{U}$ of $X$.
In the following we generalize a one half of \emph{Variational Principle} (\cite[$\S$8.2]{Walters1}) for $S$.
\begin{lemma}\label{2110040650}
Let $\c{P},\c{Q}$ be partitions of $X$ and $\mu$ stationary Borel measure on $X^\infty$. Then
$${H}^{ip}(\mu,\c{P})\leq{H}^{ip}(\mu,\c{Q})+H^{sh}(\mu,\ov{\c{P}}^1/\ov{\c{Q}}^1),$$
where $H^{sh}(\mu,\ov{\c{P}}^1/\ov{\c{Q}}^1)$ denotes the conditional entropy of $\ov{\c{P}}^1$ given (the $\sigma$-algebra generated by)
$\ov{\c{Q}}^1$ with respect to $\mu$ \cite[$\S$4.3]{Walters1}.\end{lemma}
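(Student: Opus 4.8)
The plan is to reduce the claimed inequality to the standard conditional-entropy subadditivity result for a single measure-preserving transformation, namely the shift. By Theorem \ref{2109280631}, since $\mu$ is stationary, I have $\r{H}^\r{sd}(\mu,\c{P})=\r{E}(\r{Sh},\mu,\ov{\c{P}}^1)$ and similarly $\r{H}^\r{sd}(\mu,\c{Q})=\r{E}(\r{Sh},\mu,\ov{\c{Q}}^1)$ (taking $m=1$ in both cases). Thus the target inequality becomes exactly
\begin{equation*}\r{E}(\r{Sh},\mu,\ov{\c{P}}^1)\leq\r{E}(\r{Sh},\mu,\ov{\c{Q}}^1)+H_\mu(\ov{\c{P}}^1/\ov{\c{Q}}^1),\end{equation*}
which is a relation between entropies of a single transformation relative to two partitions of the path-space $X^\infty$ together with their conditional entropy. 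This is precisely the classical bound in Walters' book (see \cite[Theorem 4.12(iv)]{Walters1}), which states that for any measure-preserving $T$ and partitions $\alpha,\beta$ one has $\r{E}(T,\mu,\alpha)\leq\r{E}(T,\mu,\beta)+H_\mu(\alpha/\beta)$.

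First I would invoke Theorem \ref{2109280631} twice to rewrite both stochastic dynamical entropies as ordinary $\r{E}(\r{Sh},\mu,\cdot)$ quantities, setting $\alpha:=\ov{\c{P}}^1$ and $\beta:=\ov{\c{Q}}^1$, which are genuine finite measurable partitions of $X^\infty$. Then I would apply the cited inequality of Walters directly to the shift transformation $\r{Sh}$ on $(X^\infty,\Sigma^\infty,\mu)$; the hypothesis that $\mu$ is stationary guarantees $\r{Sh}$ is measure-preserving, so the theorem is applicable. Combining these two moves yields the statement immediately, since the conditional-entropy term $H_\mu(\ov{\c{P}}^1/\ov{\c{Q}}^1)$ appearing in the lemma is literally the term on the right of Walters' inequality.

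I do not anticipate a serious obstacle here: the content of the lemma is essentially a translation of a known single-transformation fact through the dictionary established in Theorem \ref{2109280631}. The only point requiring mild care is verifying that $\ov{\c{P}}^1$ and $\ov{\c{Q}}^1$ are the correct partitions to feed into the classical inequality — that is, confirming that the conditional entropy $H_\mu(\ov{\c{P}}^1/\ov{\c{Q}}^1)$ in the lemma's statement matches the conditional entropy $H_\mu(\alpha/\beta)$ in Walters' formulation under the identification $\alpha=\ov{\c{P}}^1,\beta=\ov{\c{Q}}^1$. Since both refer to the conditional entropy of one partition given the $\sigma$-algebra generated by another, with respect to the same measure $\mu$, this matching is immediate. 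Hence the proof is a short two-step argument with no heavy computation.
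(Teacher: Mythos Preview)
Your proposal is correct and follows exactly the same approach as the paper: invoke Theorem \ref{2109280631} with $m=1$ to identify $\r{H}^\r{sd}(\mu,\c{P})=\r{E}(\r{Sh},\mu,\ov{\c{P}}^1)$ and $\r{H}^\r{sd}(\mu,\c{Q})=\r{E}(\r{Sh},\mu,\ov{\c{Q}}^1)$, and then apply \cite[Theorem 4.12(iv)]{Walters1} to the shift. The paper's proof is precisely this two-line argument.
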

\begin{proof}
It follows from \cite[Theorem 4.12(iv)]{Walters1} and identities
$${H}^{ip}(\mu,\c{P})={H^{sh}}(\f{S},\mu,\ov{\c{P}}^1)\quad\text{and}\quad{H}^{ip}(\mu,\c{Q})={H^{sh}}(\f{S},\mu,\ov{\c{Q}}^1).$$\end{proof}
\begin{proposition}\label{2110040657}
Variational Inequality: Let $X$ be a compact metrizable space endowed with its Borel $\sigma$-algebra.
Let $S\subseteq X^\infty$ be an arbitrary subset. Then for any stationary probability measure $\mu$ on $X^\infty$ with
$\r{Support}(\mu)\subseteq S$ we have $${H}^{ip}(\mu)\leq H^{tip}(S).$$\end{proposition}
\begin{proof}
Let $\nu$ denotes the marginal of $\mu$ on the first component of $X^\infty$ i.e. the push-forward of $\mu$ under the mapping
$(x_0,x_1,\ldots)\mapsto x_0$. Since $X$ is compact and metrizable, $\nu$ is regular. Let $\c{P}=\{A_i\}_{i=1}^k$ be a (Borel) partition
for $X$. Then as it is denoted in \cite[Proof of Theorem 8.2]{Walters1} there exist compact sets $B_i\subseteq A_i$ ($i=1,\ldots,k$)
such that $$H^{sh}(\mu,\ov{\c{P}}^1/\ov{\c{Q}}^1)=H^{sh}(\nu,\c{P}/\c{Q})\leq1,$$ where $\c{Q}$ denotes the partition $\{B_0,B_1,\ldots,B_k\}$
of $X$ with $B_0:=X\setminus(\cup_{i=1}^kB_k)$. Let $\c{U}$ denote the open cover $\{U_i\}_{i=1}^k$ of $X$ where $U_i:=B_0\cup B_i$.
Then we have $${H^{sh}}(\mu,\ov{\c{Q}}^n)\leq\log\big(2^n{N}_{\r{Support}(\mu)}(\ov{\c{U}}^n)\big)\leq\log\big(2^n{N}_{S}(\ov{\c{U}}^n)\big).$$
Thus $${H}^{ip}(\mu,\c{Q})\leq{H}^{tip}(S,\c{U})+\log2\leq{H}^{tip}(S)+\log2,$$ and hence by Lemma \ref{2110040650} we have
$${H}^{ip}(\mu,\c{P})\leq{H}^{tip}(S)+\log2+1.$$ Since $\c{P}$ is an arbitrary partition, the above inequality implies that
\begin{equation}\label{2110040651}{H}^{ip}(\mu)\leq{H}^{tip}(S)+\log2+1.\end{equation}
The inequality (\ref{2110040651}) holds for any triple $(X,\mu,S)$ satisfying the assumptions of the proposition. Thus if we imagine $\mu$
as a measure on $(X^n)^\infty$ (in the same way that we did in Proposition \ref{2110040655}) and $S$ as a subset of $(X^n)^\infty$ then by
Proposition \ref{2110040655} and \cite[Theorem 3.15]{Sadr1} the inequality (\ref{2110040651}) becomes
$$n{H}^{ip}(\mu)\leq n{H}^{tip}(S)+\log2+1.$$ The proof is complete.\end{proof}
In the following we give another proof of Proposition \ref{2110040657}:
We know that $\r{Support}(\mu)\subseteq X^\infty$ is compact. Since $\mu$ is stationary it can be easily checked that $\r{Support}(\mu)$ is
$\f{S}$-invariant. Thus $\f{S}$ may be regarded as a measure-preserving transformation on the probability space $(\r{Support}(\mu),\mu)$
and then we have $${H}^{ip}(\mu)={H^{sh}}(\f{S},\mu)={H^{sh}}(\f{S}|_{\r{Support}(\mu)},\mu).$$ Thus it follows from the Variational Principle
\cite[Theorem 8.2]{Walters1} that $${H}^{ip}(\mu)\leq h(\f{S}|_{\r{Support}(\mu)}),$$ where $h$ denotes topological entropy of continuous
endomorphisms. It can be shown that the value $h(\f{S}|_{\r{Support}(\mu)})$ is equal to ${H}^{tip}(\r{Support}(\mu))$. Now since
$\r{Support}(\mu)\subseteq S$ from \cite[Theorem 3.3]{Sadr1} we have ${H}^{tip}(\r{Support}(\mu))\leq{H}^{tip}(S)$.
Thus ${H}^{ip}(\mu)\leq{H}^{tip}(S)$.

{\footnotesize}
\end{document}